\documentclass[11pt,letterpaper]{article}
\usepackage[margin=30mm]{geometry}
\usepackage{amsmath,amssymb,amsthm,mathtools}
\usepackage{microtype}
\usepackage{xcolor}
\usepackage{tikz}
\usepackage{float}
\usepackage[hidelinks]{hyperref}
\usetikzlibrary{arrows.meta,calc}

\definecolor{pairblue}{RGB}{27,94,158}
\definecolor{pairred}{RGB}{190,52,52}
\definecolor{softgray}{RGB}{105,105,105}

\tikzset{
  terminal/.style={
    circle,fill=white,inner sep=1.5pt,line width=0.85pt
  },
  pair one/.style={draw=pairblue,line width=1.2pt},
  pair two/.style={draw=pairred,line width=1.2pt},
  panel/.style={
    rounded corners=2pt,draw=black!18,fill=black!1
  },
  panel caption/.style={
    font=\scriptsize,align=center,text=black!78
  }
}

\newtheorem{theorem}{Theorem}
\newtheorem{lemma}{Lemma}

\newcommand{\R}{\mathbb{R}}
\newcommand{\len}{\operatorname{len}}

\title{Joining Two Pairs of Planar Points by Disjoint Arcs: The Sharp $\sqrt{2}$ Length Bound}
\author{\small George M. Georgiou\thanks{\small School of Computer Science and Engineering, California State University, San Bernardino}\\[3pt]
\small \href{mailto:georgiou@csusb.edu}{georgiou@csusb.edu}}
\date{}

\begin{document}
\maketitle

\begin{abstract}
Problem F16 of Croft, Falconer, and Guy asks for the least worst-case
length needed to join prescribed pairs of points by pairwise disjoint planar
arcs, when the distance within each pair is at most one.  The book suggests
that the answer for two pairs is $\sqrt2$.  We prove this exactly.  The lower
bound is a crossing argument in a square.  The upper bound follows from a
sharp ellipse lemma.  As a consequence, the value proposed in the book for
three pairs, $(\sqrt3+1)/2$, cannot be correct under the literal formulation,
because the constants are nondecreasing in the number of pairs.
\end{abstract}

\section{Definition and result}

Let
\[
 \mathcal P=((x_1,y_1),(x_2,y_2))
\]
be two pairs of distinct planar points, with the four terminals distinct and
$|x_i-y_i|\leq 1$.  Throughout, $|uv|$ denotes the Euclidean distance
between points $u,v\in\R^2$, and $\len(\gamma)$ denotes the length of a
rectifiable arc $\gamma$.  Put
\[
 \lambda(\mathcal P)=
 \inf_{\gamma_1,\gamma_2}
 \max\{\len(\gamma_1),\len(\gamma_2)\},
\]
where $\gamma_i$ is a rectifiable simple arc from $x_i$ to $y_i$ and
$\gamma_1\cap\gamma_2=\varnothing$.  Define the two-pair constant by
\[
 C_2=\sup_{\mathcal P}\lambda(\mathcal P).
\]
The use of an infimum is natural: in an extremal configuration, the shortest
route may pass through a terminal of the other pair and can only be
approached by disjoint arcs.

\begin{theorem}
The two-pair constant in Problem F16 is
\[
 C_2=\sqrt2.
\]
\end{theorem}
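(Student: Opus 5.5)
The proof has two halves: a lower bound $C_2\ge\sqrt2$ from one explicit configuration, and an upper bound $\lambda(\mathcal P)\le\sqrt2$ for every admissible $\mathcal P$.

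\textbf{Lower bound.} Take the unit square with vertices $x_1=(0,0)$, $x_2=(1,0)$, $y_1=(1,1)$, $y_2=(0,1)$. Then $|x_1y_1|=|x_2y_2|=\sqrt2$, which violates $|x_i-y_i|\le1$, so the square must be rescaled. Use the square of side $1/\sqrt2$, whose diagonals have length $1$. Let the pairs be the two diagonals: $x_1,y_1$ are opposite corners, and so are $x_2,y_2$.

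Suppose $\gamma_1,\gamma_2$ are disjoint arcs with $\max\len(\gamma_i)<\sqrt2$. The region $\{z:|zx_1|+|zy_1|<\sqrt2\}$ is an ellipse with foci $x_1,y_1$. Its major axis is $\sqrt2$, and it does not contain $x_2$ or $y_2$, since for those points the sum equals $2\cdot(1/\sqrt2)=\sqrt2$. Hence $\gamma_1$ lies in the open ellipse $E_1$, and likewise $\gamma_2$ lies in the ellipse $E_2$ with foci $x_2,y_2$.

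The crossing argument then runs as follows. The closed curve formed by $\gamma_1$ and an arc of $\partial E_1$ separates $x_2$ from $y_2$ inside a suitable region. Alternatively, work in the square: the union $E_1\cup E_2$ is contained in a neighborhood of the square. From the terminals, $\gamma_1$ joins the two opposite corners while $\gamma_2$ joins the other two. A Jordan-curve or ``opposite sides of a quadrilateral'' argument forces an intersection unless one arc leaves the square and goes around a corner. Going around a corner, say $x_2$, costs at least $|x_1x_2|+|x_2y_1|=\sqrt2$, a passing-near-terminal path. I would make this precise by parametrizing angles about the center and noting that $\gamma_1$ must wind past $x_2$ or $y_2$.

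This gives $\lambda\ge\sqrt2$. Since the value is only approached, the infimum is exactly $\sqrt2$ here.

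\textbf{Upper bound.} Given arbitrary pairs, route $\gamma_1$ as the straight segment $x_1y_1$, of length $\le1$. If the segment $x_2y_2$ does not cross it, we are done. If it does cross, reroute $\gamma_2$ around an endpoint of segment $x_1y_1$, say $x_1$, choosing the one with smaller detour. The path $x_2\to x_1\to y_2$, perturbed slightly, is disjoint from $\gamma_1$ and has length $|x_2x_1|+|x_1y_2|$. The key lemma (the ``ellipse lemma'') is this: if segments $x_1y_1$ and $x_2y_2$ of length at most $1$ cross, then
\[
\min\bigl(|x_2x_1|+|x_1y_2|,\;|x_2y_1|+|y_1y_2|\bigr)\le\sqrt2.
\]
In other words, one endpoint of $x_1y_1$ lies in the closed ellipse with foci $x_2,y_2$ and major axis $\sqrt2$.

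To prove it, I would use symmetry to reduce to the extremal case where both segments have length $1$. Lengthening a segment outward only increases the sums, after a suitable monotonicity check. Let the crossing point be $o$ and the angle between the segments be $\theta$. By convexity in the position along each segment, the worst case is when $o$ bisects both segments and $\theta=\pi/2$. There the sums are both $2\cdot\tfrac{1}{\sqrt2}=\sqrt2$. A cleaner route may be to note that $y_1$ and $x_1$ lie on opposite sides of line $x_2y_2$, and to use the sum $f(x_1)+f(y_1)$, where $f(z)=|zx_2|+|zy_2|$. Convexity of $f$ together with the parallelogram-type identity bounds the average by $\sqrt2$ when $|x_1y_1|,|x_2y_2|\le1$ and the segments cross. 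I expect this sharp inequality to be the main obstacle, and I would first attempt it via the identity $\sum$ of squared distances in a quadrilateral with crossing diagonals, combined with Cauchy--Schwarz.
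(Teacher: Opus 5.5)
\textbf{Upper bound: the key lemma is false.} You keep $x_1y_1$ straight for an arbitrary labeling and claim that one of $x_1,y_1$ lies in the closed ellipse with foci $x_2,y_2$ and major axis $\sqrt2$. Take $x_1=(0,\tfrac12)$, $y_1=(0,-\tfrac12)$, $x_2=(-\varepsilon,0)$, $y_2=(1-\varepsilon,0)$. The two unit segments cross at the origin, which is interior to both, yet
\[
|x_2x_1|+|x_1y_2|=|x_2y_1|+|y_1y_2|=\sqrt{\varepsilon^2+\tfrac14}+\sqrt{(1-\varepsilon)^2+\tfrac14}\longrightarrow\tfrac{1+\sqrt5}{2}>\sqrt2 .
\]
So your construction yields about $1.618$ here. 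The same example refutes your claim that the worst case is the bisected right-angle crossing, and also the averaging route through $f(x_1)+f(y_1)$. The configuration itself is harmless: keeping $x_2y_2$ straight and detouring $\gamma_1$ around $x_2$ costs $2\sqrt{\varepsilon^2+1/4}\approx1$.

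The missing idea is the selection rule. Detour around the terminal $A$ that is nearest to the crossing point $O$ among all four terminals. This may require swapping which pair stays straight, which is harmless because $\lambda$ is a maximum over both arcs. Let $C$ be the nearer endpoint of the other pair; then $a=|AO|\le c=|CO|\le\tfrac12$. Push $D$ out along the ray $OD$ to $D'$ with $|CD'|=1$; this cannot decrease $|AD|$, since $|OD|\ge a$. In coordinates with $C=(\tfrac12,0)$, $D'=(-\tfrac12,0)$, $O=(q,0)$ and $q=\tfrac12-c$, the point $A=(q+at,a\sqrt{1-t^2})$ satisfies $2x^2+4y^2=4(a^2+q^2)-2(at-q)^2\le4(a+q)^2\le1$. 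That is exactly $|CA|+|AD'|\le\sqrt2$. You also leave the touching and collinear-overlap cases untreated, though these are routine $O(\rho)$ perturbations.

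\textbf{Lower bound: the crossing step is not carried out.} Your configuration and the ellipse confinement are correct; this is the paper's configuration. Your first version of the crossing step fails: $\gamma_1$ stays in the open ellipse $E_1$, never meets $\partial E_1$, so no closed curve is formed. Your second version works in the square whose corners are the terminals. The ellipses protrude beyond that square, so the arcs can leave it, and the caveat ``unless it goes around a corner'' is never resolved.

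The fix is to use tangent lines. The tangents to $E_1$ at its minor vertices $x_2,y_2$ are parallel to $x_1y_1$, so $\gamma_1$ lies in the open strip between them. Likewise, $\gamma_2$ lies between the tangents to $E_2$ at $x_1,y_1$. These four lines bound a unit square $Q$ whose side midpoints are the terminals. Now $\gamma_1$ can leave $Q$ only through the sides containing $x_1$ and $y_1$. Hence it contains a subarc joining those two sides and otherwise lying in the interior of $Q$. The same holds for $\gamma_2$ and the other two sides. The Jordan-curve argument in $Q$ then forces these two subarcs to meet.
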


The proof uses the following elementary lemma.

\begin{lemma}[ellipse lemma]\label{lem:ellipse}
Let two segments $AB$ and $CD$, each of length at most one, cross at an
interior point $O$.  Put
\[
 a=|AO|,\quad b=|BO|,\quad c=|CO|,\quad d=|DO|.
\]
Relabel the endpoints, and interchange the two pairs if necessary, so that
\[
 a\leq b,\qquad c\leq d,\qquad a\leq c.
\]
Then
\[
 |CA|+|AD|\leq\sqrt2.
\]
\end{lemma}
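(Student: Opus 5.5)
The plan is to place the crossing point at the origin, bound $|CA|+|AD|$ by Cauchy--Schwarz, and reduce the resulting quadratic inequality to a factorization that the normalizations $a\le c$ and $c+d\le 1$ make nonpositive.

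\emph{Coordinates.} Put $O$ at the origin and let $CD$ lie on the $x$-axis, with $C=(-c,0)$ and $D=(d,0)$. Then $A=a(\cos\theta,\sin\theta)$ for some angle $\theta$, where $\theta$ records the angle between the two segments. The constraints available are
\[
 a\le b,\quad a+b\le 1,\quad c\le d,\quad c+d\le 1,\quad a\le c .
\]
Of these I only need $a\le c$, $c\le d$ and $d\le 1-c$. By the law of cosines,
\[
 |CA|^2=a^2+c^2+2ac\cos\theta,\qquad |AD|^2=a^2+d^2-2ad\cos\theta .
\]

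\emph{Cauchy--Schwarz.} Rather than locate the exact maximizer on the confocal ellipses with foci $C,D$, I would use
\[
 (|CA|+|AD|)^2\le 2\bigl(|CA|^2+|AD|^2\bigr)=2\bigl(2a^2+c^2+d^2-2a(d-c)\cos\theta\bigr).
\]
Since $d-c\ge 0$, the right-hand side is largest at $\cos\theta=-1$. It therefore suffices to prove
\[
 2a^2+c^2+d^2+2a(d-c)\le 1 .
\]

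\emph{Reduction to one factorization.} The left-hand side is increasing in $d$ for $d\ge0$. So I substitute the extreme value $d=1-c$ allowed by $c+d\le 1$. After cancelling and dividing by $2$, the inequality becomes
\[
 a^2+c^2-2ac+a-c=(c-a)^2-(c-a)=(c-a)(c-a-1)\le 0 .
\]
This holds because $0\le c-a\le c\le 1$. That is exactly where the normalization $a\le c$ enters.

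\emph{Equality and the main obstacle.} Equality requires $|CA|=|AD|$, $\cos\theta=-1$ (degenerate), $c+d=1$, and $c=a$ or $c=a+1$. This matches the expected extremal configuration $a=c=d=\tfrac12$ in the limit. The main step to double-check is whether the crude Cauchy--Schwarz bound is really enough, since it discards the geometry of the ellipse entirely. The computation above suggests it is, because the factorization closes without slack issues. If some case slipped through, I would fall back on convexity of $A\mapsto|CA|+|AD|$: maximize over the circle $|OA|=a$, where the maximum sits on the minor-axis direction of the ellipse, and compare with $\sqrt2$ directly.
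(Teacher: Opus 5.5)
Your proof is correct, and it takes a genuinely different route from the paper. The chain
\[
 |CA|^2+|AD|^2=2a^2+c^2+d^2-2a(d-c)\cos\theta\le 2a^2+c^2+d^2+2a(d-c)\le 1+2(c-a)(c-a-1)\le 1
\]
holds under the hypotheses you use. The bound $|CA|+|AD|\le\sqrt{2(|CA|^2+|AD|^2)}$ then finishes.

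The paper instead works with the exact locus of the quantity being bounded. It pushes $D$ along the ray $OD$ to the point $D'$ with $|CD'|=1$, and justifies $|AD|\le|AD'|$ by a separate monotonicity argument along that ray (using $d\ge a$). It then checks, in coordinates centered at the midpoint of $CD'$, that $A$ lies in the ellipse $2x^2+4y^2\le1$ with foci $C,D'$ and major axis $\sqrt2$. This uses $2(q+at)^2+4a^2(1-t^2)=4(a^2+q^2)-2(at-q)^2\le4(a+q)^2\le1$ with $q=\tfrac12-c$.

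Your Cauchy--Schwarz step trades that ellipse for the set $\{X:|XC|^2+|XD|^2\le1\}$. Let $M$ be the midpoint of $CD$. Since $|XC|^2+|XD|^2=2|XM|^2+\tfrac12|CD|^2$, this set is the largest disk centered at $M$ inside the ellipse $\{X:|XC|+|XD|\le\sqrt2\}$, touching it at the ends of the minor axis. The extremal configuration sits exactly there, so the relaxation costs nothing. In this language, your maximization over $\theta$ is just $|AM|\le|AO|+|OM|=a+\tfrac{d-c}{2}$. The monotonicity of your polynomial in $d$ replaces the extension to $D'$.

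You gain a shorter, purely algebraic argument with no auxiliary point or ray lemma, and the stronger conclusion $|CA|^2+|AD|^2\le1$. The paper's version keeps the argument tied to the ellipse that its figure depicts.

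One correction to your closing remarks; it does not affect the proof, since the lemma asserts only the inequality. Equality does not force $\cos\theta=-1$. In the extremal case $c=d=\tfrac12$ the coefficient $a(d-c)$ vanishes, so $\theta$ is unconstrained at that step. Equality in Cauchy--Schwarz, $|CA|=|AD|$, then forces $\cos\theta=0$. Equality is therefore attained, not merely approached, at $a=c=d=\tfrac12$ with perpendicular segments. The conditions you list are in fact incompatible: $\cos\theta=-1$ together with $a=c$ puts $A$ at $C$, giving $|CA|=0\ne1=|AD|$. The fallback paragraph is unnecessary.
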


\begin{proof}
The ordering assumptions and the unit-length hypotheses imply
$a\leq c\leq\tfrac12$.  Write $q=\tfrac12-c$, so $q\geq0$ and
$a+q\leq\tfrac12$.  Extend $D$ along the ray $OD$ to a point $D'$ such that
$|CD'|=1$.  It is enough to prove
\[
 |CA|+|AD'|\leq\sqrt2.
\]
Indeed, $d\geq c\geq a$.  If $\theta=\angle AOD$, then the squared distance
from $A$ to the point on the ray $OD$ at distance $s$ from $O$ is
\[
 a^2+s^2-2as\cos\theta.
\]
For $s\geq a$, its derivative is
$2(s-a\cos\theta)\geq2(s-a)\geq0$.  Thus the distance from $A$ is
nondecreasing there, and $|AD|\leq|AD'|$.

The ellipse with foci $C,D'$ and focal distance $|CD'|=1$ whose major-axis
length is $\sqrt2$ has semiaxes $1/\sqrt2$ and $1/2$.  Use orthonormal
coordinates centered at the midpoint of $CD'$, with the positive first axis
pointing toward $C$; orient the second axis so that $A$ has nonnegative
second coordinate.  Then
\[
 C=(\tfrac12,0),\qquad D'=(-\tfrac12,0),\qquad O=(q,0).
\]
If $t=\cos\angle AOC$, the coordinates of $A$ are therefore
\[
 (q+at,\;a\sqrt{1-t^2}).
\]
The filled ellipse has equation $2x^2+4y^2\leq1$.  At $A$, its left-hand
side is
\begin{align*}
 2(q+at)^2+4a^2(1-t^2)
 &=4(a^2+q^2)-2(at-q)^2 \\
 &\leq 4(a^2+q^2) \\
 &\leq 4(a+q)^2 \\
 &\leq 1.
\end{align*}
Here the penultimate inequality uses $a,q\geq0$, and the last uses
$a+q=a+\tfrac12-c\leq\tfrac12$.  Hence $A$ lies in the ellipse, so
\[
 |CA|+|AD'|\leq\sqrt2.
\]
Together with $|AD|\leq|AD'|$, this proves the claim.
\end{proof}

The bound is sharp: equality holds when the two segments are perpendicular
unit segments bisected by $O$.

\begin{figure}[H]
\centering
\begin{tikzpicture}[
  x=1.25cm,y=1.25cm,font=\small,line cap=round,line join=round
]
  \fill[pairblue!4] (0,0) ellipse (2.12 and 1.50);
  \draw[pairblue!65,line width=0.95pt] (0,0) ellipse (2.12 and 1.50);
  \node[pairblue!80!black,fill=white,inner sep=1pt] at (0,1.90)
    {$E=\{X:|CX|+|XD'|\leq\sqrt2\}$};

  \coordinate (C) at (-1.50,0);
  \coordinate (Dp) at (1.50,0);
  \coordinate (D) at (0.92,0);
  \coordinate (O) at (-0.45,0);
  \coordinate (A) at (-0.10,0.58);
  \coordinate (B) at (-1.15,-1.16);

  \draw[black!45,line width=0.7pt] (C)--(A)--(Dp);
  \node[black!60,above left=-1pt] at ($(C)!0.54!(A)$) {$|CA|$};
  \node[black!60,above right=-1pt] at ($(A)!0.54!(Dp)$) {$|AD'|$};
  \draw[pair one] (A)--(B);
  \draw[pair two] (C)--(D);
  \draw[softgray,dashed,-{Latex[length=2mm]}] (D)--(Dp);
  \node[softgray,font=\scriptsize,above] at ($(D)!0.5!(Dp)$)
    {extend};

  \node[terminal,draw=pairblue,label={[pairblue]above right:$A$}] at (A) {};
  \node[terminal,draw=pairblue,label={[pairblue]below left:$B$}] at (B) {};
  \node[terminal,draw=pairred,label={[pairred]below left:$C$}] at (C) {};
  \node[terminal,draw=pairred,label={[pairred]below:$D$}] at (D) {};
  \node[terminal,draw=pairred!55,label={[pairred!70]below right:$D'$}]
    at (Dp) {};
  \fill[black] (O) circle (1.25pt);
  \node[below right=-1pt] at (O) {$O$};

  \draw[-{Latex[length=2mm]},black!55] (-0.52,-0.40)--(-1.08,-0.40);
  \node[black!60,font=\scriptsize,below] at (-0.80,-0.40)
    {positive first axis};
\end{tikzpicture}
\caption{Geometry of the ellipse lemma.  Extending $D$ to $D'$ fixes the
focal distance at one.  The calculation proves $A\in E$, which is precisely
$|CA|+|AD'|\leq\sqrt2$.  The coordinate axis is oriented toward $C$.}
\label{fig:ellipse}
\end{figure}

\section{Proof of the theorem}

\subsection*{Upper bound}

Consider any two terminal pairs $A,B$ and $C,D$, with
$|AB|,|CD|\leq1$.  If the straight segments $AB$ and $CD$ are disjoint,
they already give admissible arcs of length at most one.

Suppose first that the segments cross transversely at an interior point $O$.
Choose $A$ nearer to $O$ than $B$, and $C$ nearer to $O$ than $D$.  Then
\[
 |AO|\leq\frac{|AB|}{2}\leq\frac12,
 \qquad
 |CO|\leq\frac{|CD|}{2}\leq\frac12.
\]
After interchanging the two pairs if necessary, assume $|AO|\leq|CO|$.
This interchange merely swaps the two entries in the maximum defining
$\lambda(\mathcal P)$, so it does not change its value.
Lemma~\ref{lem:ellipse} yields
\[
 |CA|+|AD|\leq\sqrt2.
\]
Keep $AB$ as the first arc.  The polygonal chain $C\to A\to D$ meets $AB$
only at $A$: its two open segments lie on opposite sides of the line through
$AB$.  In a disk $B_\rho(A)$, delete the two short pieces incident with $A$
and join their new endpoints by a circular arc that avoids the radial segment
$AB$.  The replacement arc has length at most $2\pi\rho$, so the resulting
$C$--$D$ arc has length at most
\[
 |CA|+|AD|+2\pi\rho.
\]
It is simple and disjoint from $AB$.  Given $\varepsilon>0$, choosing
$\rho<\varepsilon/(2\pi)$ therefore gives an arc of length less than
$\sqrt2+\varepsilon$.  Hence
\[
 \lambda(\mathcal P)\leq\sqrt2.
\]

The remaining incidences are elementary limiting cases.  If one terminal
lies in the interior of the other segment, detour the latter segment around
that terminal inside a disk of radius $\rho$; the added length is
$O(\rho)$.  Since the four terminals are distinct, if $\eta$ is their
minimum pairwise distance, all terminal disks may be chosen with radius less
than $\eta/4$ and are then pairwise disjoint.  If the two segments overlap
collinearly, route one pair at distance $\rho$ on one side of their common
line and the other pair at distance $\rho$ on the other side, joining each
route to its endpoints by perpendicular pieces.  The two arcs are disjoint
and have lengths at most $|AB|+2\rho$ and $|CD|+2\rho$.  Letting
$\rho\downarrow0$ and using the infimum in the definition of $\lambda$
handles every degenerate case.  Therefore $C_2\leq\sqrt2$.

\begin{figure}[H]
\centering
\resizebox{0.98\textwidth}{!}{%
\begin{tikzpicture}[font=\small,line cap=round,line join=round]
  \begin{scope}[xshift=0cm]
    \draw[panel] (-2.05,-1.55) rectangle (2.05,1.55);
    \node[font=\bfseries\footnotesize] at (0,1.28) {(a) Crossing};
    \coordinate (A) at (-1.35,0);
    \coordinate (B) at (1.35,0);
    \coordinate (C) at (-0.15,0.92);
    \coordinate (D) at (0.25,-0.92);
    \draw[pair one] (A)--(B);
    \draw[pair two] (C)--(D);
    \fill[black] (0.05,0) circle (1.4pt);
    \node[above right=-1pt] at (0.05,0) {$O$};
    \node[terminal,draw=pairblue,label={[pairblue]left:$A$}] at (A) {};
    \node[terminal,draw=pairblue,label={[pairblue]right:$B$}] at (B) {};
    \node[terminal,draw=pairred,label={[pairred]right:$C$}] at (C) {};
    \node[terminal,draw=pairred,label={[pairred]right:$D$}] at (D) {};
    \node[panel caption,pairred!80!black] at (0,-1.30)
      {not admissible};
  \end{scope}

  \begin{scope}[xshift=4.55cm]
    \draw[panel] (-2.05,-1.55) rectangle (2.05,1.55);
    \node[font=\bfseries\footnotesize] at (0,1.28) {(b) Route through $A$};
    \coordinate (A) at (-1.35,0);
    \coordinate (B) at (1.35,0);
    \coordinate (C) at (-0.15,0.92);
    \coordinate (D) at (0.25,-0.92);
    \draw[pair one] (A)--(B);
    \draw[pair two] (C)--(A)--(D);
    \node[terminal,draw=pairblue,label={[pairblue]below:$A$}] at (A) {};
    \node[terminal,draw=pairblue,label={[pairblue]right:$B$}] at (B) {};
    \node[terminal,draw=pairred,label={[pairred]right:$C$}] at (C) {};
    \node[terminal,draw=pairred,label={[pairred]right:$D$}] at (D) {};
    \draw[pairred,line width=0.8pt] (A) circle (3.5pt);
    \node[panel caption] at (0,-1.30)
      {length $|CA|+|AD|$; touches $A$};
  \end{scope}

  \begin{scope}[xshift=9.10cm]
    \draw[panel] (-2.05,-1.55) rectangle (2.05,1.55);
    \node[font=\bfseries\footnotesize] at (0,1.28)
      {(c) Detour of radius $\rho$};
    \coordinate (A) at (-1.35,0);
    \coordinate (B) at (1.35,0);
    \coordinate (C) at (-0.15,0.92);
    \coordinate (D) at (0.25,-0.92);
    \draw[pair one] (A)--(B);
    \draw[pair two,rounded corners=4pt]
      (C)--(-1.28,0.16)--(-1.62,0)--(-1.28,-0.16)--(D);
    \draw[pairred!55,dashed] (A) circle (0.27);
    \draw[-{Latex[length=2mm]},pairred!70!black]
      (-1.72,0.38)--(-1.55,0.18);
    \node[pairred!70!black,font=\scriptsize] at (-1.76,0.50) {$\rho$};
    \node[terminal,draw=pairblue,label={[pairblue]below:$A$}] at (A) {};
    \node[terminal,draw=pairblue,label={[pairblue]right:$B$}] at (B) {};
    \node[terminal,draw=pairred,label={[pairred]right:$C$}] at (C) {};
    \node[terminal,draw=pairred,label={[pairred]right:$D$}] at (D) {};
    \node[panel caption] at (0,-1.30)
      {disjoint; length $\to |CA|+|AD|$};
  \end{scope}
\end{tikzpicture}%
}
\caption{The upper-bound construction.  The limiting route through $A$ has
the desired length but touches the blue arc.  A radius-$\rho$ detour makes the
arcs disjoint, with an error tending to zero.}
\label{fig:detour}
\end{figure}

\subsection*{Lower bound}

Take the four terminals
\[
 A=(-\tfrac12,0),\quad B=(\tfrac12,0),\quad
 C=(0,-\tfrac12),\quad D=(0,\tfrac12).
\]
Both prescribed distances equal one.  Let $\alpha$ join $A$ to $B$ and
$\beta$ join $C$ to $D$.

Assume, for a contradiction, that $\alpha$ and $\beta$ are disjoint and that
both have length less than $\sqrt2$.  For every $P=(x,y)$ on $\alpha$, the
two subarcs cut at $P$ give
\[
 \len(\alpha)\geq |A-P|+|P-B|.
\]
For fixed $y$, the right-hand side is a convex function of $x$ symmetric
about $x=0$, and is therefore minimized at $x=0$.  There it equals
\[
 2\sqrt{\frac14+y^2}=\sqrt{1+4y^2}.
\]
It follows that every point of $\alpha$ has $|y|<\tfrac12$.  Similarly,
every point of $\beta$ has $|x|<\tfrac12$.

Let $Q=[-\tfrac12,\tfrac12]^2$.  By taking the portion of $\alpha$ between
its last visit to the left side of $Q$ before its first subsequent visit to the
right side, we obtain a subarc in $Q$ joining the left and right sides and
otherwise lying in the interior of $Q$.  Analogously, $\beta$ contains a
subarc in $Q$ joining the bottom and top sides.  The planar crossing lemma
says that such two subarcs must intersect.  For completeness, join the
left--right subarc to the boundary arc in $\partial Q$ that passes through
the bottom side.  The result is a Jordan curve.  Immediately after leaving its bottom endpoint,
the bottom--top subarc is inside this curve; just before reaching its top
endpoint, it is outside.  Since it is otherwise in the interior of $Q$, it
cannot cross the added boundary portion and must cross the left--right
subarc.  This contradicts the disjointness of $\alpha$ and $\beta$.

\begin{figure}[H]
\centering
\resizebox{0.98\textwidth}{!}{%
\begin{tikzpicture}[font=\small,line cap=round,line join=round]
  \begin{scope}[xshift=0cm]
    \draw[panel] (-2.05,-1.85) rectangle (2.05,1.65);
    \node[font=\bfseries\footnotesize] at (0,1.52)
      {(a) A short $A$--$B$ arc};
    \fill[pairblue!5] (-1.75,-0.72) rectangle (1.75,0.72);
    \fill[pairred!5] (-1.75,0.72) rectangle (1.75,1.10);
    \fill[pairred!5] (-1.75,-1.10) rectangle (1.75,-0.72);
    \draw[black!35,dashed] (-1.75,0.72)--(1.75,0.72);
    \draw[black!35,dashed] (-1.75,-0.72)--(1.75,-0.72);
    \coordinate (A) at (-1.45,0);
    \coordinate (B) at (1.45,0);
    \draw[pair one]
      (A)..controls(-0.95,0.52) and (-0.45,-0.40)..(0,0.08)
         ..controls(0.48,0.48) and (0.95,-0.42)..(B);
    \node[terminal,draw=pairblue,label={[pairblue]left:$A$}] at (A) {};
    \node[terminal,draw=pairblue,label={[pairblue]right:$B$}] at (B) {};
    \node[panel caption] at (0,-1.55)
      {$\len(\alpha)<\sqrt2\Rightarrow |y|<\tfrac12$};
    \node[font=\scriptsize,pairred!70!black] at (0,0.93) {cannot reach};
    \node[font=\scriptsize,pairred!70!black] at (0,-0.93) {cannot reach};
  \end{scope}

  \begin{scope}[xshift=4.55cm]
    \draw[panel] (-2.05,-1.85) rectangle (2.05,1.65);
    \node[font=\bfseries\footnotesize] at (0,1.52)
      {(b) A short $C$--$D$ arc};
    \fill[pairred!5] (-0.72,-1.10) rectangle (0.72,1.10);
    \fill[pairblue!5] (-1.75,-1.10) rectangle (-0.72,1.10);
    \fill[pairblue!5] (0.72,-1.10) rectangle (1.75,1.10);
    \draw[black!35,dashed] (-0.72,-1.10)--(-0.72,1.10);
    \draw[black!35,dashed] (0.72,-1.10)--(0.72,1.10);
    \coordinate (C) at (0,-1.20);
    \coordinate (D) at (0,1.20);
    \draw[pair two]
      (C)..controls(-0.48,-0.78) and (0.40,-0.35)..(-0.06,0.02)
         ..controls(-0.42,0.40) and (0.48,0.78)..(D);
    \node[terminal,draw=pairred,label={[pairred]right:$C$}] at (C) {};
    \node[terminal,draw=pairred,label={[pairred]right:$D$}] at (D) {};
    \node[panel caption] at (0,-1.55)
      {$\len(\beta)<\sqrt2\Rightarrow |x|<\tfrac12$};
    \node[font=\scriptsize,pairblue!70!black,rotate=90] at (-1.20,0)
      {cannot reach};
    \node[font=\scriptsize,pairblue!70!black,rotate=90] at (1.20,0)
      {cannot reach};
  \end{scope}

  \begin{scope}[xshift=9.10cm]
    \draw[panel] (-2.05,-1.85) rectangle (2.05,1.65);
    \node[font=\bfseries\footnotesize] at (0,1.52)
      {(c) Inside the square $Q$};
    \fill[black!3] (-0.92,-0.92) rectangle (0.92,0.92);
    \draw[black!42] (-0.92,-0.92) rectangle (0.92,0.92);
    \coordinate (A) at (-0.92,0);
    \coordinate (B) at (0.92,0);
    \coordinate (C) at (0,-0.92);
    \coordinate (D) at (0,0.92);
    \coordinate (P) at (0.02,0.05);
    \draw[pair one]
      (A)..controls(-0.58,0.40) and (-0.30,-0.22)..(P)
         ..controls(0.32,0.34) and (0.58,-0.36)..(B);
    \draw[pair two]
      (C)..controls(-0.38,-0.56) and (0.30,-0.28)..(P)
         ..controls(-0.30,0.30) and (0.36,0.58)..(D);
    \fill[black] (P) circle (1.45pt);
    \node[above right=-1pt] at (P) {$P$};
    \node[terminal,draw=pairblue,label={[pairblue]left:$A$}] at (A) {};
    \node[terminal,draw=pairblue,label={[pairblue]right:$B$}] at (B) {};
    \node[terminal,draw=pairred,label={[pairred]right:$C$}] at (C) {};
    \node[terminal,draw=pairred,label={[pairred]right:$D$}] at (D) {};
    \node[panel caption] at (0,-1.55)
      {opposite-side subarcs must meet};
  \end{scope}
\end{tikzpicture}%
}
\caption{The lower-bound mechanism.  Each path shorter than $\sqrt2$ is
trapped in a strip.  Inside $Q$, the resulting opposite-side subarcs must
intersect.}
\label{fig:lower-bound}
\end{figure}

Thus at least one of the two arcs has length at least $\sqrt2$, so the displayed
configuration has $\lambda(\mathcal P)\geq\sqrt2$.  Hence
$C_2\geq\sqrt2$, completing the proof.

\section{The three-pair value quoted in the book}

Let $C_n$ denote the corresponding constant for $n$ pairs.  The sequence is
nondecreasing.  Starting with any configuration of $n$ pairs, add one more
pair in a small disk far from all existing terminals; its two points can be
joined inside that disk.  Every admissible family for the enlarged
configuration restricts to an admissible family for the original one, so
the enlarged configuration has cost at least that of the original.  Taking
suprema gives
\[
 C_{n+1}\geq C_n.
\]
In particular,
\[
 C_3\geq C_2=\sqrt2.
\]
But
\[
 \frac{\sqrt3+1}{2}<\sqrt2,
\]
since $(\sqrt3+1)^2=4+2\sqrt3<8$.  Thus the value
$(\sqrt3+1)/2$ proposed for $n=3$ in F16 is incompatible with the exact
two-pair result under the stated definition, independently of the eventual
value of $C_3$.  Determining $C_3$ remains a separate problem.
\section*{Acknowledgments}

The author used OpenAI’s ChatGPT 5.6 Sol to assist with the initial discovery of the result and drafting of the proof, and Google’s Gemini 3.1 Pro for subsequent review and refinement. The author independently verified the final arguments and assumes responsibility for their correctness.

\end{document}